\DeclareFontFamily{T1}{cbgreek}{}
\DeclareFontShape{T1}{cbgreek}{m}{n}{<-6>  grmn0500 <6-7> grmn0600 <7-8> grmn0700 <8-9> grmn0800 <9-10> grmn0900 <10-12> grmn1000 <12-17> grmn1200 <17-> grmn1728}{}
\DeclareSymbolFont{quadratics}{T1}{cbgreek}{m}{n}
\DeclareMathSymbol{\qoppa}{\mathord}{quadratics}{19}
\DeclareMathSymbol{\Qoppa}{\mathord}{quadratics}{21}
\newcommand{\Sp}{\mathrm{Sp}}
\newcommand{\GW}{\mathrm{GW}}
\newcommand{\K}{\mathrm{K}}
\newcommand{\s}{\mathrm{s}}
\newcommand{\gs}{\mathrm{gs}}
\newcommand{\gq}{\mathrm{gq}}
\newcommand{\gl}{\mathrm{g}\lambda}
\newcommand{\q}{\mathrm{q}}
\renewcommand{\L}{\mathrm{L}}
\newcommand{\Dperf}{\mathscr{D}^{\mathrm{p}}}
\newcommand{\burn}{\mathrm{b}}
\renewcommand{\ge}{\mathrm{ge}}
\newcommand{\N}{\mathrm{N}}
\newcommand{\QF}{\Qoppa}
\newcommand{\Z}{\mathbb{Z}}
\renewcommand{\SS}{\mathbb{S}}
\newcommand{\map}{\mathrm{map}}
\newcommand{\m}{\mathfrak{m}}
\newcommand{\Mod}{\mathrm{Mod}}
\newcommand{\Fun}{\mathrm{Fun}}
\newcommand{\lto}{\longrightarrow}
\newtheorem*{ThmA}{Theorem A}
\theoremstyle{definition}
\newtheorem{Rmk}{Remark}
\theoremstyle{remark}
\newtheorem*{claim*}{Claim}
\theoremstyle{plain}
\newcounter{zaehler}
\newtheorem*{introcor*}{Corollary}
\title{Gabber rigidity in hermitian K-theory}
\author{Markus Land}
\address{Mathematisches Institut, Ludwig-Maximilians-Universit\"at M\"unchen, Theresienstra\ss e 39, 80333 M\"unchen, Germany}
\email{markus.land@math.lmu.de}
\thanks{The author was supported by the Danish National Research Foundation through the Copenhagen Centre for Geometry and Topology (DNRF151).}
\date{\today}
\begin{document}

\begin{abstract}
We note that Gabber's rigidity theorem for the algebraic K-theory of henselian pairs also holds true for hermitian K-theory with respect to arbitrary form parameters. 
\end{abstract}

\maketitle

Let $R$ be a commutative ring and $\m \subseteq R$ an ideal such that $(R,\m)$ is a henselian pair. Standard examples include henselian local rings like valuation rings of complete nonarchimedean fields as well as pairs where $R$ is $\m$-adically complete or where $\m$ is locally nilpotent. We write $F = R/\m$ and let $n$ be a natural number which is invertible in $R$.
Then Gabber's rigidity theorem \cite{Gabber} says that the canonical map 
\[ K(R)/n \lto K(F)/n \]
is an equivalence; this result was preceded by work of Suslin \cite{Suslin} who showed this conclusion for henselian valuation rings. See also \cite{CMM} for an extension of this result, involving topological cyclic homology, to the case where $n$ need not be invertible in $R$ and a general discussion of henselian pairs. The purpose of this short note is to use the results of \cite{CDHII,CDHIII} as well as \cite{HNS} to show that Gabber's rigidity property also holds true for hermitian K-theory, a.k.a.\ Grothendieck--Witt theory. 
\newline

To state the main result, let $\lambda$ be a form parameter over $R$ in the sense of \cite[\S 3]{Schlichting}, see also \cite[Definition 4.2.26]{CDHI}. In loc.\ cit.\ it is explained that such a form parameter $\lambda$ is equivalently described by a Poincar\'e structure $\QF^{\gl}_R$ in the sense of \cite{CDHI} on $\Dperf(R)$ which sends projective $R$-modules to discrete spectra. Here, $\Dperf(R)$ denotes the stable $\infty$-category of perfect complexes over $R$. 
We will assume that the $\Z$-module with involution over $R$ underlying the form parameter $\lambda$ by is given by $\pm R$, that is, given by the $R$-module $R$ with $C_2$-action either the identity or multiplication by $-1$, viewed as an $R\otimes R$-module via the multiplication map.
There is then an induced form parameter on $F$ whose associated Poincar\'e structure on $\Dperf(F)$ we will denote by $\QF^{\gl}_F$, see \cref{Remark} below for details. The construction is made so that the extension of scalars functor canonically refines to a Poincar\'e functor $(\Dperf(R),\QF^{\gl}_R) \to (\Dperf(F),\QF^{\gl}_F)$ and therefore a map on Grothendieck--Witt theory. Standard examples of form parameters capture the notion of quadratic, even, and symmetric forms (as well as their skew-quadratic, skew-even, and skew-symmetric cousins) with associated Poincar\'e structures $\QF^{\pm \gq}, \QF^{\pm \ge},$ and $\QF^{\pm \gs}$. A further example is provided by the Burnside Poincar\'e structure $\QF^\burn$ whose L-theory was calculated explicitly for $\Z$ in \cite[Example 1.3.18]{CDHIII} and whose $0$'th Grothendieck--Witt group was studied for commutative rings with 2 invertible in the PhD thesis of Dylan Madden \cite{Madden}.
%\ML{need correct assumptions on the form parameter. Linear term given by connective covers of Tate seem to work, for instance. The burnside ring structure maybe not.}
With this notation fixed, we have the following result.

\begin{ThmA}
Let $(R,\m)$ be a henselian pair, $F=R/\m$ and let $n$ be a natural number invertible in $R$. Then the canonical map 
\[ \GW(R;\QF^{\gl}_R)/n \lto \GW(F;\QF^{\gl}_F)/n \]
is an equivalence.
\end{ThmA}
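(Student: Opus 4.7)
The plan is to apply the fundamental fibre sequence of Grothendieck--Witt theory established in \cite{CDHII,CDHIII}, which for a Poincar\'e $\infty$-category $(\sC,\QF)$ takes the form
\[ K(\sC)_{hC_2} \lto \GW(\sC,\QF) \lto L(\sC,\QF), \]
where the involution on $K(\sC)$ is induced by the duality functor. Applied to $(\Dperf(R),\QF^{\gl}_R)$ and $(\Dperf(F),\QF^{\gl}_F)$ and the Poincar\'e functor given by extension of scalars along $R \to F$, this yields a commutative diagram of fibre sequences. Since $(-)/n$ preserves fibre sequences, it suffices to establish the analogous mod-$n$ rigidity statements on the $K$-theoretic and $L$-theoretic terms.

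For the $K$-theoretic term, Gabber's theorem supplies the equivalence $K(R)/n \xrightarrow{\simeq} K(F)/n$, and the extra input needed is its compatibility with the $C_2$-actions induced by the dualities. This is precisely what the hypothesis on $\lambda$ buys: since its underlying module with involution is $\pm R$, the duality on $\Dperf(R)$ base changes along $R \to F$ to the duality on $\Dperf(F)$, so Gabber's equivalence naturally refines to one of $C_2$-spectra. Taking $C_2$-orbits and modding out by $n$ then gives the desired rigidity $K(R)_{hC_2}/n \xrightarrow{\simeq} K(F)_{hC_2}/n$.

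For the $L$-theoretic term, I would invoke \cite{HNS}, which I expect to furnish the rigidity statement $L(R,\QF^{\gl}_R)/n \xrightarrow{\simeq} L(F,\QF^{\gl}_F)/n$ for henselian pairs with $n$ invertible in $R$. Combining both rigidity statements with the comparison diagram of mod-$n$ fibre sequences and applying the five lemma on homotopy groups then yields the theorem. I expect the main technical content to lie in the $L$-theoretic step: ensuring that the cited reference indeed provides rigidity for the $L$-theory of an arbitrary Poincar\'e structure $\QF^{\gl}$ whose underlying module with involution is $\pm R$, and that the $n$-invertibility hypothesis is strong enough to control the subtle $2$-primary behaviour intrinsic to $L$-theory. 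The $K$-theoretic step, by contrast, should be essentially automatic given Gabber's classical result together with the functoriality of $K$-theory on Poincar\'e $\infty$-categories.
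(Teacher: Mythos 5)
Your overall strategy — the CDHII fibre sequence $K_{hC_2} \to \GW \to \L$, Gabber rigidity for the fibre term, and reduction to a mod-$n$ rigidity statement in L-theory — is exactly the paper's. The K-theoretic step is treated the same way, and your remark that the hypothesis on $\lambda$ is what makes the duality base change correctly along $R\to F$ is on the right track.

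The gap is in how you propose to handle the L-theory term. You "expect" \cite{HNS} to directly furnish $\L(R;\QF^{\gl}_R)/n \simeq \L(F;\QF^{\gl}_F)/n$ for henselian pairs — but that is not what \cite{HNS} proves, and no such direct L-theoretic rigidity theorem for general form parameters is in the literature. What \cite{HNS} supplies is a \emph{formula for relative L-theory}, i.e.\ for the cofibre of the comparison map $\L(R;\QF^{\q}_{\pm R}) \to \L(R;\QF^{\gl}_R)$ between the homotopy-quadratic and the genuine Poincar\'e structure on a fixed $\Dperf(R)$. The actual rigidity input for L-theory is a different and older result: Wall's theorem (recorded in \cite[Prop.\ 2.3.7 \& Rmk.\ 2.3.8]{CDHIII}) that $\L(R;\QF^{\q}_{\pm R}) \to \L(F;\QF^{\q}_{\pm F})$ is an \emph{integral} equivalence for a henselian pair $(R,\m)$. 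The argument then combines the two: one compares the square relating $\L^{\q}$ and $\L^{\gl}$ over $R$ and over $F$, uses the \cite{HNS} formula to observe that the horizontal cofibres are built from $R$- (resp.\ $F$-) module spectra and hence are $\SS[\tfrac1n]$-modules, so vanish mod $n$; this reduces the claim to the left vertical map $\L^{\q}(R)\to\L^{\q}(F)$, which is Wall. Without this two-step reduction — Wall for the quadratic structure plus the $\SS[\tfrac1n]$-module observation for the relative term — the L-theoretic leg of your five-lemma argument has no support, so the proposal as written does not close.
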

\begin{proof}
The main result of \cite{CDHII} gives a diagram of horizontal fibre sequences
\[\begin{tikzcd}
	\K(R)_{hC_2} \ar[r] \ar[d] & \GW(R;\QF^{\gl}_R) \ar[r] \ar[d] & \L(R;\QF^{\gl}_R) \ar[d] \\
	\K(F)_{hC_2} \ar[r] & \GW(F;\QF^{\gl}_F) \ar[r] & \L(F;\QF^{\gl}_F)
\end{tikzcd}\]
and by Gabber rigidity, the left vertical map becomes an equivalence after tensoring with $\SS/n$. Therefore, the statement of the theorem is equivalent to the statement that the map 
\[ \L(R;\QF^{\gl}_R)/n \lto \L(F;\QF^{\gl}_R)/n \]
is an equivalence. 
We then consider the diagram
\[ \begin{tikzcd}
	\L(R;\QF^{\q}_{\pm R}) \ar[r] \ar[d] & \L(R;\QF^{\gl}_R) \ar[d] \\
	\L(F;\QF^{\q}_{\pm F}) \ar[r] & \L(F;\QF^{\gl}_F)
\end{tikzcd}\]
where $\QF^{\q}_{\pm R}$ denotes the homotopy quadratic Poincar\'e structure associated to the invertible module with involution $M$ which is part of the form parameter $\lambda$, and likewise for $\QF^{\q}_{\pm F}$. We now observe that the formula for relative L-theory obtained in \cite{HNS} shows that the top and bottom horizontal cofibre are $\SS[\tfrac{1}{n}]$-modules.

Indeed, \cite{HNS} shows that the cofibre of the top horizontal arrow is a filtered colimit of objects of the form
\[ \mathrm{Eq} \big( \map_{R}(T\otimes_R T,R) \rightrightarrows (\Sigma^{1-\sigma}\map_{R}(T\otimes_R T,R))_{hC_2} \big) \]
for $T \in \Dperf(R)$. Since $\map_{R}(T\otimes_R T,R)$ is canonically an $R$-module and $n$ is invertible in $R$, it is also an $\SS[\tfrac{1}{n}]$-module. Moreover, since $\Mod(\SS[\tfrac{1}{n}]) \subseteq \Sp$ is a full subcategory closed under colimits and limits, both terms in the equaliser, and therefore also the equaliser itself belong to $\Mod(\SS[\tfrac{1}{n}])$. 
Consequently, the horizontal maps in the above diagram become equivalences upon tensoring with $\SS/n$. The statement of the main theorem is therefore equivalent to the statement that the left vertical map in the above commutative square is an equivalence. This is a consequence of work of Wall's \cite{Wall} as explained in \cite[Prop.\ 2.3.7 \& Remark 2.3.8]{CDHIII}.
\end{proof}

\begin{Rmk}
Restricting the situation above to form parameters rather than general Poincar\'e structures on $\Dperf(R)$ was merely a cosmetic choice to obtain a result about classical Grothendieck--Witt theory: 
Indeed, it is again a consequence of the main theorem of \cite{CDHII} that the diagram
\[\begin{tikzcd}
	\GW(R;\QF^\q_{\pm R}) \ar[r] \ar[d] & \GW(R;\QF) \ar[d] \\
	\L(R;\QF^\q_{\pm R}) \ar[r] & \L(R;\QF)
\end{tikzcd}\]
is a pullback diagram for any Poincar\'e structure $\QF$ on $\Dperf(R)$ whose $\Z$-module with involution over $R$ is given by $\pm R$. The proof presented above therefore shows that for any ring $R$ in which $n$ is invertible, the canonical map 
\[ \GW(R;\QF_{\pm R}^\q)/n \lto \GW(R;\QF)/n \]
is an equivalence and that Gabber rigidity holds for the Poincar\'e structure $\QF^{\q}_{\pm R}$.

In particular, Gabber rigidity also applies to the homotopy symmetric Poincar\'e structure $\QF^{\pm\s}$ as well as the Tate Poincar\'e structure $\QF^{\mathrm{t}}_R$, see \cite[Example 3.2.12]{CDHI}.
\end{Rmk}

\begin{Rmk}
Rigidity in hermitian K-theory has of course been studied in several works before, see for instance \cite{Karoubi, Jardine, HY, Yagunov} for the case of rings with involution. The main purpose here is to show how to use the formalism of Poincar\'e categories and the main result of \cite{CDHII} to reduce rigidity in hermitian K-theory to rigidity in algebraic K-theory and L-theory in a way that allows to treat general form parameters. 
\end{Rmk}

\begin{Rmk}\label{Remark}
In this remark, we describe how extension of scalars can be used to prolong a form parameter over $R$ along a map $R \to R'$ of rings.
It is here that the assumption on the underlying module with involution is used. Indeed, we will describe a general construction on Hermitian structures, and the assumption is used to ensure that the given Poincar\'e structure is sent to a Poincar\'e structure rather than merely a Hermitian structure.

Namely, in \cite[\S 3.3]{CDHI}, we have shown that the category of Hermitian structures on $\Dperf(R)$ is equivalent to the category $\Mod_{\N(R)}(\Sp^{C_2}) = \Mod(\N(R))$, that is, the category of modules over the multiplicative norm $\N(R)$ in the category $\Sp^{C_2}$ of genuine $C_2$-spectra. Moreover, the category $\Mod(\N(R))$ is equipped with a canonical $t$-structure whose heart is equivalent to the category of (possibly degenerate) form parameters over $R$, see \cite[Remark 4.2.27]{CDHI}. Objects in $\Mod(\N(R))$
are described by triples $(M,N, \alpha)$ where
\begin{enumerate}
\item[-] $M$ is an object of $\Mod_{R\otimes R}(\Sp^{BC_2})$, where $R\otimes R$ is an algebra in spectra with $C_2$-action where the action flips the two tensor factors,
\item[-] $N$ is an object of $\Mod(R)$, and
\item[-] $\alpha$ is a map $N \to M^{tC_2}$ of $R$-modules,
\end{enumerate}
see \cite{CDHI}; the Poincar\'e structures then consist of the above triples where $M$ is \emph{invertible} in the sense of \cite[Def.\ 3.1.3]{CDHI}. We warn the reader that caution has to be taken in regards to how $M^{tC_2}$ is to be viewed as an $R$-module, see e.g.\ \cite[pg.\ 7]{CDHIII} for the details. An object $(M,N, \alpha)$ is connective in the canonical $t$-structure on $\Mod(\N(R))$ if and only if $M$ and $N$ are connective. 

The Poincar\'e structure associated to the triple $(M,N,\alpha)$ is denoted by $\QF^\alpha_M$.
Assuming that $M$ is in the image of the canonical functor $\Fun(BC_2,\Mod(R)) \to \Mod_{R\otimes R}(\Fun(BC_2,\Sp))$, the triple 
\[ (M',N', \alpha') = (R'\otimes_R M, R'\otimes_R N, R'\otimes_R N \to R'\otimes_R M^{tC_2} \to (R'\otimes_R M)^{tC_2}) \]
gives rise to a Poincar\'e structure on $\Dperf(R')$ for which the extension of scalar functor canonically refines to a Poincar\'e functor $(\Dperf(R),\QF^\alpha_M) \to (\Dperf(R'),\QF^{\alpha'}_{M'})$, see \cite[Lemma 3.4.3]{CDHI}. Now, if $(M,N,\alpha)$ was associated to a form parameter, then the same need not be true for the triple $(M',N',\alpha')$: Indeed, this is the case if and only if $\QF^{\alpha'}_{M'}(R')$ is a discrete spectrum which in general need not be the case. However, we may consider the composite
\[ M'_{hC_2} \lto \QF^{\alpha'}_{M'}(R') \lto \tau_{\leq 0}\QF^{\alpha'}_{M'}(R') \]
and denote its cofibre by $N''$. The pushout diagram of spectra
\[\begin{tikzcd}
	\QF^{\alpha'}_{M'}(R') \ar[r] \ar[d] & N' \ar[d] \\
	(M')^{hC_2} \ar[r] & (M')^{tC_2} 
\end{tikzcd}\]
and the fact that $(M')^{hC_2}$ is coconnective shows that there is a canonical map $\alpha''\colon N'' \to (M')^{tC_2}$. By construction, the triple $(M', N'', \alpha'')$ is an object of $\Mod(\N(R'))^{\heartsuit}$ and in fact identifies with $\tau_{\leq 0}(M',N',\alpha')$. This object determines a Poincar\'e structure $\QF^{\gl'}$ associated to a form parameter $\lambda'$ over $R'$ for which the extension of scalars functor refines to a Poincar\'e functor
\[ (\Dperf(R),\QF^{\gl}) \lto (\Dperf(R'),\QF^{\gl'}).\]
%
%
%To remedy this fact, we note, as explained in \cite[Remark 4.2.27]{CDHI}, that the category $\Mod(\N(R))$ carries a canonical $t$-structure whose heart is equivalent to the category of form parameters over $R$. Connective objects in this $t$-structure are those where both $M$ and $N$ are connective, so one deduces that $(M',N',\alpha')$ is again connective. We may therefore define the object $\tau_{\leq 0}(M',N',\alpha')$ which is then associated to a form parameter over $R'$. Concretely, this is given as follows
%
%Given a map of rings $R \to R'$, there is a canonical extension of scalars functor $\Mod(\N(R)) \to \Mod(\N(R'))$ which preserves connective objects (but possibly not coconnective objects). The composition
%\[ \Mod(\N(R))^{\heartsuit} \lto \Mod(\N(R))_{\geq 0} \lto \Mod(\N(R'))_{\geq 0} \stackrel{\tau_{\leq 0}}{\lto} \Mod(\N(R'))^{\heartsuit} \]
%is then the extension of scalar functor from form parameters over $R$ to form parameters over $R'$ used in the main theorem above. 

To give an example of this construction, we recall the genuine Poincar\'e structures $\QF^{\geq m}_{\pm R}$ which, for $m=0,1,2$ are the Poincar\'e structures $\QF^{\gq}_{\pm R}$, $\QF^{\ge}_{\pm R}$ and $\QF^{\gs}_{\pm R}$ associated to the classical (skew-) quadratic, even, and symmetric form parameter over $R$, respectively, see \cite[Remark R.3 \& R.5]{CDHIII}. In this case, the extension of scalars functor associated to a ring map $R \to R'$ indeed sends $\QF^{\geq m}_{\pm R}$ to $\QF^{\geq m}_{\pm R'}$.
\end{Rmk}

\begin{Rmk}
For the Poincar\'e structures $\QF^{\geq m}_{\pm R}$ one can give the following argument that the map 
\[ \GW(R;\QF^{\geq m}_{\pm R})/n \lto \GW(F;\QF^{\geq m}_{\pm R})/n \]
is an equivalence without appealing to the general formula for relative L-theory of \cite{HNS}. Namely, in \cite[Prop.\ 3.1.14]{CDHIII} we have shown that the map
\[ \L(R;\QF^{\q}_{\pm R})[\tfrac{1}{2}] \lto \L(R;\QF^{\geq m}_{\pm R})[\tfrac{1}{2}] \]
is an equivalence for all $m \in \Z$. Therefore, the proof of the theorem applies in the case where 2 does not divide $n$. In the case where $2$ divides $n$, we deduce that $2$ is invertible in $R$ in which case already the map $\QF^{\q}_{\pm R} \to \QF^{\geq m}_{\pm R}$ is an equivalence of Poincar\'e structures, see \cite[Remark R.4]{CDHIII}.
\end{Rmk}

\begin{Rmk}
Suppose that $R$ is an associative ring which is $\m$-adically complete for an ideal $\m \subset R$. Then the result of Wall, see again \cite[Prop.\ 2.3.7]{CDHIII}, says that the map $\L^{\pm \q}(R) \to \L^{\pm \q}(R/\m)$ is an equivalence. To the best of our knowledge, it is not known whether also the map $K(R)/n \to K(R/\m)/n$ is an equivalence. However, if it is, this argument shows that the same is true for Grothendieck--Witt theory and vice versa.
\end{Rmk}

\bibliographystyle{amsalpha}
\bibliography{rigidity}

\end{document}